  \chardef\forshowkeys=0
  \chardef\refcheck=0
  \chardef\showllabel=0
\newtheorem{thm}{Theorem}
\newtheorem{cor}[thm]{Corollary}
 \newtheorem{lemma}[thm]{Lemma}
\theoremstyle{definition}
\numberwithin{equation}{section}
\def\indeq{\quad{}} 
\def\comma{ {\rm ,\qquad{}} }
\def\colb{\color{black}}
\def \no#1#2#3 {{\bf #1} (#3), #2.}
\def \eds#1#2#3 {#1, #2, #3.}
\def\d{{\rm d}}
\def\:{{\colon}}
\def\be#1{\begin{equation}\label{#1}}
\def\ee{\end{equation}}
\def\<{\langle}
\def\>{\rangle}
\def\coloneqq{:=}
\newcommand{\na}{\nabla}
\newcommand{\lec}{\lesssim}
\newcommand{\les}{\lesssim}
\newcommand{\bs}{\begin{split}}
\newcommand{\essss}{\end{split}}
\renewcommand{\div}{\operatorname{div}}
\newcommand{\eqnb}{\begin{equation}}
\newcommand{\eqne}{\end{equation}}
\renewcommand{\ee}{\mathrm{e}}
\newcommand{\p}{\partial}
\newcommand{\RR}{\mathbb{R}}
\renewcommand{\d}{\mathrm{d}}
\newcommand{\supp}{\operatorname{supp}}
\newcommand\blfootnote[1]{%
  \begingroup
  \renewcommand\thefootnote{}\footnote{#1}%
  \addtocounter{footnote}{-1}%
  \endgroup
}
\begin{document}
\title{Local-in-time existence of free-surface 3D Euler flow with $H^{2+\delta}$ initial vorticity in a neighborhood of the free boundary}
\author{I. Kukavica, W. S. O\.za\'nski}
\date{\vspace{-5ex}}
\maketitle
\blfootnote{I.~Kukavica: Department of Mathematics, University of Southern California, Los Angeles, CA 90089, USA, email: kukavica@usc.edu\\
W.~S.~O\.za\'nski: Department of Mathematics, University of Southern California, Los Angeles, CA 90089, USA, email: ozanski@usc.edu\\ 
I.~Kukavica was supported in part by the
NSF grant DMS-1907992. W.~S.~O\.za\'nski was supported in part by the Simons Foundation. }

\begin{abstract}
We consider the three-dimensional Euler equations in a domain with a free boundary with no surface tension. We assume that $u_0 \in H^{2.5+\delta }$ is such that $\mathrm{curl}\,u_0 \in H^{2+\delta }$ in an arbitrarily small neighborhood of the free boundary, and we use Lagrangian approach to derive an a~priori estimate that can be used to prove local-in-time existence and uniqueness of solutions under the Rayleigh-Taylor stability condition.
\end{abstract}


\section{Introduction}
In this note we address the local existence of solutions to the free-surface Euler equations
\eqnb\label{euler}
\begin{split}
\p_t u + (u\cdot \na ) u +\na p &=0 \\
\mathrm{div}\, u &= 0
\end{split}
\eqne
in $\Omega(t)$, where $\Omega (t)$ is the region that is periodic in $x_1,x_2$, and $x_3$ lies between $\Gamma_0 \coloneqq \{ x_3=0 \}$ and a free surface $\Gamma_1 (t)$ such that $\Gamma_1 (0)\coloneqq \{ x_3=1 \}$. The Euler equations \eqref{euler} are supplemented with an initial condition $u(0)=u_0$.
We consider the impermeable boundary condition on the fixed bottom boundary $\Gamma_0$ and no surface tension on the free boundary, that is 
\eqnb\label{bcs}
\begin{split}
u_3&=0\qquad \text{ on }\Gamma_0,\\
p&=0 \qquad \text{ on }\Gamma_1.
\end{split}
\eqne
Furthermore the initial pressure is assumed to satisfy the Rayleigh-Taylor condition
\eqnb\label{rayleigh_taylor}
\p_{x_3} p (x,0) \leq -b <0 \qquad \text{ for } x\in \Gamma_1 (0),
\eqne
where $b>0$ is a constant.

The problem of local existence of solutions to the free boundary Euler equations
has been initially considered in \cite{Sh,Shn,N,Y1,Y2} under the
assumption of irrotationality of the initial data, i.e., with
$\mathrm{curl}\,u_0=0$. The local 
existence of solutions in such case with $u_0$ from a Sobolev space was
established by Wu in \cite{W1,W2}. Ebin showed in \cite{E} that for
general data in a Sobolev space, the problem is unstable without
assuming
the Rayleigh-Taylor sign condition \eqref{rayleigh_taylor}, which in
essence requires that next to the free
boundary the pressure in
the fluid is higher than the pressure of air.

With the Rayleigh-Taylor condition, the a~priori bounds for the existence of solutions for initial data
in a Sobolev space were provided in \cite{ChL}. New energy estimates in the Lagrangian coordinates along with the construction of solutions were provided in~\cite{CS1} (see also \cite{CS2}).
We refer the reader to \cite{ABZ1,B,CL,DE1,DE2,DK,KT1,KT2,KT3,S,T,ZZ} for other works concerned with local or global existence results of the Euler equations with or without surface tension, and to \cite{ABZ2,AD,AM1,AM2,EL,GMS,HIT,I,IT,IK,IP,KT3,L,Lin1,Lin2,MR,OT,P,W3} for other related works on the Euler equations with a free-evolving boundary.

In this paper, we are concerned with the problem of local existence of solutions, and we aim to impose minimal regularity assumptions on the initial data $u_0$ that gives local-in-time existence of solutions in the Lagrangian setting of the problem.   It is well-known that the threshold of regularity for the
Euler equations in ${\mathbb R}^{n}$ (\cite{KP}) or a fixed domain is $u_0\in H^{2.5+\delta}$,
where $\delta>0$ is arbitrarily small.
In the case of the domain with a free boundary, 
this has recently been proven in \cite{WZZZ} in the Eulerian candidates
(cf.~\cite{SZ1,SZ2} for the local existence in $H^{3}$). However, the
same result in the Lagrangian coordinates is not known. One of the main difficulties is the fact that it is not clear whether an assumption that $f,g\in H^{2.5+\delta } $ implies that the same is true of $f\circ g $ (the composition of $f$ and $g$) when $\delta$ is not an integer.
The main result of this paper is to obtain a~priori estimates for the local existence
in $H^{2.5+\delta}$ with an additional regularity assumption on initial vorticity $\omega_0\coloneqq \mathrm{curl}\, u_0$ in an arbitrarily small neighborhood of the free boundary.

We note that a related result was obtained in \cite{KTVW} in the 2D case, but the
coordinates used in \cite{KTVW} are not Lagrangian; they are in a sense a concatenation
of Eulerian and Lagrangian variables. 
Moreover, the proof in~\cite{KTVW} uses in an essential way the preservation of Lagrangian vorticity, which is a property that does not hold
in the 3D case. Another paper \cite{KT4} considers the problem
in ALE coordinates (cf.~\cite{MC} for the ALE coordinates in the fluid-structure interaction problem), but the proof requires an additional assumption on the initial data, due to additional regularity required by the Lagrangian variable on the boundary; for instance, the present paper shows that the conditions of Theorem~\ref{thm_main} (see below) suffice.

The main feature of the present paper is that the particle map preserves additional ($H^{3+\delta}$) regularity for a short time in a small neighborhood of the boundary, a feature
not available in the Eulerian approach.
The proof of our main result, which we state in Theorem~\ref{thm_main} below, after a brief introduction of the Lagrangian coordinates, is direct and short. It is inspired by earlier works
\cite{KT2,KTV} and previously by \cite{ChL,CS1,CS2}. 
Another new idea of our approach is a localization of the tangential
estimates (i.e., estimates concerned with differential operators with respect to $x_1$, $x_2$ only) which, due to the additional regularity of the particle map, can be  performed in a domain close to the boundary. 
Moreover, we formulate our a priori estimates in terms of three quantities (see \eqref{big_3}) that control the system, and we make use of the fact that all our estimates, except for the tangential estimates, are not of evolution type. Namely, they are static. 

We note that all the results in the paper also hold in the 2D case with all the Sobolev exponents reduced by~$1/2$.

In the next section we introduce our notation regarding the Euler equations in the Lagrangian coordinates, state the main result, and recall some known facts and inequalities. The proof of Theorem~\ref{thm_main} is presented in Section~\ref{sec_proof_main_result}, where we first give a heuristic argument about our strategy. The proof is then divided into several parts: The div-curl-type estimates are presented in Section~\ref{sec_div_curl}, the pressure estimates are discussed in Sections~\ref{sec_pressure}--\ref{sec_pressure_time_deriv}, and the tangential estimates are presented in Section~\ref{sec_tangential}. The proof of the theorem is concluded with the final estimates in Section~\ref{sec_final}.

\section{Preliminaries}\label{sec_prelims}
\subsection{Lagrangian setting of the Euler equations and the main theorem}
We use the summation convention of repeated indices. We denote the time derivative by $\p_t$, and a derivative with respect to $x_j$ by $\p_j$.
We denote by $v(x,t)= (v_1,v_2,v_3)$ the velocity field in Lagrangian coordinates and by $q(x,t)$ the pressure function. The Euler equations then become
\eqnb\label{euler_lagr}
\begin{split}
\p_t v_i &= - a_{ki} \p_k q,\qquad i=1,2,3,\\
a_{ik} \p_i v_k &=0
\end{split}
\eqne
in $\Omega \times (0,T)$, where $\Omega \coloneqq \Omega (0) = \mathbb{T}^2 \times (0,1)$, and $a_{ik}$ denotes the $(i,k)$-th entry 
of the matrix $a=(\na \eta )^{-1}$. 
Here $\eta $ stands for the particle map, i.e., the solution of the system
\eqnb\label{eta_def}
\begin{split}
\p_t \eta (x,t) &= v(x,t) \\
\eta (x,0) &= x
\end{split} 
\eqne
in $\Omega \times [0,T)$. 
(Note that the Lagrangian variable is denoted by $x$.)
Due to the incompressibility condition in \eqref{euler_lagr}, we have that $\mathrm{det}\, \na \eta =1$ for all times, which shows that
$a$ is the corresponding cofactor matrix. Therefore,
\eqnb\label{a_exact_form}
a_{ij} =\frac12 \epsilon_{imn} \epsilon_{jkl} \partial_{m}\eta_k\partial_{n}\eta_l
,
\eqne
where $\epsilon_{ijk}$ denotes the permutation symbol.
As for the boundary conditions \eqref{bcs}, in Lagrangian coordinates the impermeable condition for $u$ at the stationary bottom boundary $\Gamma_0$ becomes
\eqnb\label{noslip_at_bottom}
v_3 =0 \qquad \text{ on }\Gamma_0,
\eqne
while the zero surface tension condition at the top boundary $\Gamma_1 \coloneqq \Gamma_1(0) = \{ x_3 =1 \}$ reads
\eqnb\label{no_q_ontop}
q=0 \qquad \text{ on } \Gamma_1 \times (0,T).
\eqne
Note that, in Lagrangian coordinates, $\Gamma_1$ does not depend on time. 

Let us consider a localization of $u_0$ given by $\chi u_0$, where $\chi \equiv \chi (x_3) \in C^\infty (\RR ; [0,1])$ is such that $\chi (x_3) =1$ in a neighborhood of $\Gamma_1 (0)$ and $\chi( x_3 ) =0$ outside of a larger neighborhood. 
The following is our main result establishing a~priori estimates for
the local existence of solutions of the free boundary Euler equations
in the Lagrangian formulation.

\begin{thm}\label{thm_main}
Let $\delta >0$. 
Assume that $(v,q,a,\eta)$ is a $C^{\infty}$ solution of the Euler system in the Lagrangian setting
\eqref{euler_lagr}--\eqref{no_q_ontop}, and assume that $u_0$ satisfies the 
Rayleigh-Taylor condition \eqref{rayleigh_taylor}.
Then there exists a time $T>0$ depending on
$\Vert v_0\Vert_{2.5+\delta}$ 
and
$\Vert \chi (\mathrm{curl}\, u_0 ) \Vert_{2+\delta}$
such that
the norms
$\sup_t \Vert v\Vert_{2.5+\delta}$,
$\sup_t \Vert q\Vert_{2.5+\delta}$,
$\sup_t \Vert q \chi\Vert_{3+\delta}$,
and
$\sup_t \Vert \chi\eta\Vert_{3+\delta}$
on $[0,T]$ are bounded from above by a constant depending only on $\Vert v_0\Vert_{2.5+\delta}$
and
$\Vert \chi (\mathrm{curl}\, u_0 ) \Vert_{2+\delta}$.
\end{thm}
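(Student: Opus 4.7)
My plan is a bootstrap argument controlling the four norms of the statement simultaneously. Introduce the quantity
\[
M(t) := \Vert v(t)\Vert_{2.5+\delta} + \Vert q(t)\Vert_{2.5+\delta} + \Vert \chi q(t)\Vert_{3+\delta} + \Vert \chi \eta (t)\Vert_{3+\delta},
\]
set $M_0 \coloneqq M(0) + 1$, and assume on some interval $[0,T]$ that $M(t) \le 2 M_0$. The goal is to derive an inequality of the form $M(t) \le M_0 + C \int_0^t P(M(s))\,\d s$ for a polynomial $P$, from which a standard continuity argument gives $T>0$ depending only on $M_0$, i.e., on $\Vert v_0\Vert_{2.5+\delta}$ and $\Vert \chi \omega_0\Vert_{2+\delta}$.

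I would organize the estimates in four blocks. First, \emph{the particle map}: since $\chi = \chi(x_3)$ is time-independent and $\p_t \eta = v$, we have $\p_t(\chi \eta) = \chi v$, so integrating yields
\[
\Vert \chi \eta (t) - \chi \,\mathrm{id}\Vert_{3+\delta} \le \int_0^t \Vert \chi v(s)\Vert_{3+\delta}\, \d s,
\]
reducing control of $\chi \eta$ to an $H^{3+\delta}$ bound on $\chi v$ near the boundary. Second, \emph{the div-curl estimates of Section~\ref{sec_div_curl}}: in Lagrangian variables the Eulerian vorticity satisfies $\omega(\eta(x,t),t) = (\nabla \eta(x,t))\, \omega_0(x)$, so multiplying by $\chi$ and using the $H^{3+\delta}$ bound on $\chi \eta$ transfers the initial vorticity regularity $\chi \omega_0 \in H^{2+\delta}$ to a running estimate on $\chi \omega$. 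A Hodge / div-curl decomposition on $\Omega$ then controls $\Vert v\Vert_{2.5+\delta}$ from the curl, the divergence (which equals $(\delta_{ik}-a_{ik})\p_i v_k$ and is perturbatively small via $\nabla \eta - I$), and the impermeability trace on $\Gamma_0$; localizing the same decomposition to $\mathrm{supp}\,\chi$ exploits the extra half-derivative afforded by $\chi \omega \in H^{2+\delta}$ to produce the required $\Vert \chi v\Vert_{3+\delta}$.

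Third, \emph{the pressure estimates of Sections~\ref{sec_pressure}--\ref{sec_pressure_time_deriv}}: applying $a_{ji}\p_j$ to the first equation of~\eqref{euler_lagr} and using the Lagrangian incompressibility identity $a_{ik}\p_i v_k = 0$ gives a uniformly elliptic equation
\[
\p_k(a_{ji} a_{jk} \p_i q) = -(\p_t a_{ki})(\p_i v_k) - \text{lower order in } (v, \nabla \eta),
\]
with Dirichlet datum \eqref{no_q_ontop} on $\Gamma_1$ and a Neumann-type condition on $\Gamma_0$ coming from $v_3\equiv 0$. Elliptic regularity with coefficients in $H^{1.5+\delta}$ then yields $\Vert q\Vert_{2.5+\delta} \le P(M)$, and a localized version on $\mathrm{supp}\,\chi$, where the coefficients belong to $H^{2+\delta}$ thanks to $\chi\eta \in H^{3+\delta}$, gives $\Vert \chi q\Vert_{3+\delta} \le P(M)$. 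A separate elliptic estimate for $\p_t q$ will be needed to close the time derivative of the tangential estimate below.

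Fourth, and this is the genuinely evolutionary step, \emph{the tangential estimates of Section~\ref{sec_tangential}}: apply a horizontal derivative $\bar\p^{\,2.5+\delta}$ (in $x_1,x_2$ only, localized by $\chi$) to \eqref{euler_lagr}, test against $\chi^2 \bar\p^{\,2.5+\delta} v_i$ and integrate by parts. The divergence-free structure converts the pressure contribution into a boundary term on $\Gamma_1$ of the shape $-\int_{\Gamma_1} \p_3 q \, |\bar\p^{\,2.5+\delta}(\chi\eta)_3|^2$, which by the Rayleigh-Taylor assumption~\eqref{rayleigh_taylor} and continuity in time is nonnegative and thus absorbable on the left, yielding a Gronwall-type inequality for the tangential part of $\Vert \chi v\Vert_{2.5+\delta}$. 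All the commutator and product terms are estimated using $\chi\eta\in H^{3+\delta}$ and $\chi q\in H^{3+\delta}$ via fractional Leibniz and Kato-Ponce-type inequalities. Combined with the static bounds of the previous three steps, this closes the bootstrap.

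The main obstacle is the handling of the non-integer regularity $\delta\in(0,1)$ in the commutator and composition estimates, precisely the issue highlighted in the introduction. The paper's central device—restricting the $H^{3+\delta}$ regularity of $\eta$ to a neighborhood of $\Gamma_1$ via $\chi$—is what buys the extra derivative needed to run the fractional estimates cleanly, and arranging all nonlinear terms so that the missing regularity in the interior is never called upon is the most delicate part of the argument.
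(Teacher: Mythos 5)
Your proposal identifies the right building blocks (Cauchy invariance/vorticity transport, div-curl decomposition, pressure elliptic estimates, a tangential energy estimate whose boundary term is signed by the Rayleigh-Taylor condition), but the route you take to $\Vert\chi\eta\Vert_{3+\delta}$ has a genuine gap. In your step one you reduce $\Vert\chi\eta\Vert_{3+\delta}$ to a running bound on $\Vert\chi v\Vert_{3+\delta}$ via $\partial_t(\chi\eta)=\chi v$, and in step two you claim a localized div-curl estimate delivers $\Vert\chi v\Vert_{3+\delta}$ from $\chi\omega\in H^{2+\delta}$. That last claim is false: the div-curl inequality \eqref{harmonic_est} at level $s=3+\delta$ also requires control of the trace $\Vert\nabla_2 v_3\Vert_{H^{2.5+\delta}(\Gamma_1)}$, and nothing in the problem supplies that. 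Since $v$ is globally only $H^{2.5+\delta}$, its trace is only $H^{2+\delta}(\Gamma_1)$, and even the tangential energy estimate only closes at the level $\Vert S(\psi v)\Vert_{L^2}$, i.e.\ $H^{2.5+\delta}$ for $\psi v$. The velocity does \emph{not} gain half a derivative near the free boundary; there is no mechanism forcing $v_3|_{\Gamma_1}$ to be smoother than the ambient regularity of $v$. Consequently $\chi v\notin H^{3+\delta}$ in general, and the theorem (correctly) does not assert it.

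What the paper does instead is crucially different, and it is the reason the Lagrangian formulation pays off. The quantity that \emph{does} gain half a derivative is the free-surface profile $\eta_3|_{\Gamma_1}$, not $v_3|_{\Gamma_1}$: the tangential estimate \eqref{tang_est} controls $\Vert a_{3l}S\eta_l\Vert_{L^2(\Gamma_1)}$ (with $S=\Lambda^{2.5+\delta}$), precisely because the Rayleigh-Taylor condition makes the boundary contribution from the pressure a positive-definite quadratic form in $a_{3l}S\eta_l$, while the velocity only appears at the lower level $\Vert S(\psi v)\Vert_{L^2}$. The paper therefore runs the div-curl estimate directly on $\chi\eta$ rather than on $\chi v$: the curl of $\chi\eta$ is reduced to $t\,\chi\nabla\omega_0$ plus controllable commutators using the Cauchy invariance $\epsilon_{ijk}\partial_j v_m\partial_k\eta_m=(\omega_0)_i$; the divergence of $\chi\eta$ is reduced via $a_{kj}\partial_k v_j=0$; and the boundary term in \eqref{harmonic_est} is exactly $\Vert\Lambda^{2.5+\delta}\eta_3\Vert_{L^2(\Gamma_1)}$, which is what the tangential estimate provides (after replacing $a_{3l}S\eta_l$ by $S\eta_3$ using Lemma~\ref{lem_stab_a_naeta}). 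To repair your argument you should abandon the step $\chi\eta\leftarrow\chi v$ and instead close the $H^{3+\delta}$ estimate on $\chi\eta$ by this direct div-curl argument, feeding in the $\Gamma_1$-trace of $\eta_3$ from the tangential estimate. Your treatment of $\Vert v\Vert_{2.5+\delta}$, $\Vert q\Vert_{2.5+\delta}$, $\Vert\chi q\Vert_{3+\delta}$ and the role of the Rayleigh-Taylor sign are otherwise in line with the paper.
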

\colb

The rest of the paper is devoted to the proof of this theorem.

\subsection{Product and commutator estimates}
We use the standard notions of Lebesgue spaces, $L^p$, 
and Sobolev spaces, $W^{k,p}$, $H^s$, and we reserve the notation $\| \cdot \|_s \coloneqq \| \cdot \|_{H^s (\Omega )}$ for the $H^s$ norm. 
We recall the multiplicative Sobolev inequality
\eqnb\label{kpv_est}
\| fg \|_{s} \lec \| f \|_{{s }} \| g \|_{{1.5+\delta }}
   \comma s\in [0,1.5+\delta ].
\eqne
We shall also use the commutator estimates \eqnb\label{kpv1}
\| J(fg) - f Jg \|_{L^2} \lec \| f \|_{W^{s,p_1}} \| g \|_{L^{p_2}} + \| f \|_{W^{1,q_1}} \| g \|_{W^{s-1,q_2}}
\eqne
for $s\geq 1$ and $\frac{1}{p_1}+\frac1{p_2}= \frac{1}{q_1}+\frac1{q_2}= \frac12$, and
\eqnb\label{kpv3}
\| J(fg) - f Jg - g Jf \|_{L^p} \lec \| f \|_{W^{1,p_1}} \| g \|_{W^{s-1,p_2}}+ \| f \|_{W^{s-1,q_1}} \| g \|_{W^{1,q_2}}
\eqne
for $s\geq 1 $, $\frac{1}{p_1}+\frac1{p_2}=\frac{1}{q_1}+\frac1{q_2}= \frac1p$, $p\in (1,p_1)$, and $p_2,q_1,q_2<\infty$, 
where $J$ is a nonhomogeneous differential operator in $(x_1,x_2)$ of order $s\geq 0$. 
We refer the reader to \cite{KP,Li} for the proofs. We set
\[
\Lambda \coloneqq (1-\Delta_2 )^{\frac12}
\]
and
\[
S\coloneqq \Lambda^{\frac52 + {\delta}},
\]
where $\Delta_2$ denotes the Laplacian in $(x_1,x_2)$.

\subsection{Properties of the particle map $\eta$ and the cofactor matrix~$a$}
Note that applying the product estimate \eqref{kpv_est} to the representation formula \eqref{a_exact_form} for $a$, we get
\eqnb\label{a_in_H1.5}
\| a \|_{{1.5+\delta }} \lec \| \eta \|_{{2.5+\delta }}^2.
\eqne
Moreover, by writing $\chi \na \eta = \na (\chi \eta) - \eta\na \chi
$, where $\chi$ is as above,
we obtain
\eqnb\label{achi_in_H2}
\| 
\chi^2 a \|_{{2+\delta }} \lec \| \chi \eta \|_{{3+\delta }}^2+\|  \eta \|_{{2.5+\delta }}^2
.
\eqne
Also, we have
\eqnb\label{at_in_H1.5}
\| a_t  \|_{{r}} \lec \| \na v \|_{{r}} \qquad \text{ for }r\in [0,1.5+\delta ]
\eqne
and
$
\| \p_{tt} a \|_{r} \lec \| \na v \|_{1.5+\delta } \| \na v \|_r + \| \na \p_t v \|_r
$,
from where
\eqnb\label{att_in_Hr}
\| \p_{tt} a \|_{r} \lec \| \na v \|_{1.5+\delta } \| \na v \|_r + \|  a \|_{1.5+\delta } \| q \|_{2+r }
.
\eqne
Finally, we recall the Brezis-Bourgonion inequality
\eqnb\label{harmonic_est}
\| f \|_{s} \lec \| f \|_{L^2 } + \| \mathrm{curl}\, f \|_{{s-1 }} + \| \mathrm{div}\, f \|_{{s-1  }} + \| \na_2 f_3   \|_{H^{s-0.5} (\partial \Omega )},
\eqne
cf.~\cite{BB}.

\section{Proof of the main result}\label{sec_proof_main_result}

The main idea of the proof of Theorem~\ref{thm_main} is to simplify the estimates introduced in \cite{KT2} and \cite{KTV} and localize the analysis to an arbitrarily small region near the free boundary $\Gamma_1$
and show that all important quantities can be controlled by
\eqnb\label{big_3}
\| \eta \|_{2.5+\delta }, \| \chi \eta \|_{3+\delta }, \| v \|_{2.5+\delta }
.
\eqne
First we employ the div-curl estimates to estimate each of the key quantities \eqref{big_3} at time $t$ using a time integral, from $0$ until $t$, of a polynomial expression involving the same quantities, as well as terms concerned with initial data and two other terms. The two terms involve derivatives of $\eta$ and $v$ only in the variables $x_1$, $x_2$, and only very close to the free boundary $\Gamma_1$, namely $\| S \eta_3 \|_{L^2 (\Gamma_1)}$ and $\| S (\psi v ) \|_{L^2}$, where $\psi$ is a cutoff supported in a neighborhood of $\Gamma_1$ with
$\supp \psi\subset \{\xi=1\}$; see \eqref{divcurl2}--\eqref{divcurl1} below for details.
The cutoff $\psi$ is introduced at the beginning of Section~\ref{sec_div_curl}.
These two terms, however, can also be controlled by a time integral of a polynomial expression involving \eqref{big_3} only, which we show in Section~\ref{sec_tangential}, after a brief discussion of some estimates, at each fixed time, of the pressure function and its time derivative in Sections \ref{sec_pressure}--\ref{sec_pressure_time_deriv}. Finally, Section~\ref{sec_final} combines the div-curl estimates with the tangential estimates to give an a priori bound that enables local-in-time existence and uniqueness.
\colb

Before proceeding to the proof, we note that, by \eqref{eta_def}, the particle map $\eta$ satisfies
\eqnb\label{na_eta_-_I}
\nabla \eta - I = \int_0^t \na v \d s,
\eqne
where, for brevity, we have omitted the time argument $t$ on the left-hand side. We continue this convention throughout.
Moreover, observing that $a(0)=I$, where $I$ denotes the three-dimensional identity matrix, we see from \eqref{a_exact_form} that
\begin{equation}
    a-I 
      = \int_0^t \p_t a   \d s 
      = \int_0^t \na \eta \na v \d s.
   \label{EQ01}
\end{equation}
Here and below, we use the convention of omitting writing various indices when only the product structure matters; for instance, the 
expression on the far right side stands for
$\epsilon_{imn} \epsilon_{jkl} \int_{0}^{t} \partial_{m}v_k\partial_{n}\eta_l$.
The equations \eqref{na_eta_-_I} and \eqref{EQ01} demonstrate an important property that, as long as the key quantities \eqref{big_3} stay bounded, both $a$ and $\na \eta$ remain close to $I$ in the $H^{1.5+\delta }$ norm for sufficiently small times. In other words we obtain the following lemma.
\begin{lemma}[Stability of $a$ and $\na \eta$ at initial time]\label{lem_stab_a_naeta}
Let $M,T_0>0$ and suppose that $\| v \|_{2.5+\delta }$, $\| \eta \|_{2.5+\delta }$, $\| \chi \eta \|_{3+\delta } \leq M$ for $t\in [0,T_0]$. Given $\varepsilon >0$, there exists $T=T(M,\varepsilon ) \in (0,T_0)$ such that
\eqnb\label{a-I_and_naeta-I}
\| I -a  \|_{1.5+\delta },\| I -aa^T  \|_{1.5+\delta }, \| I-\na \eta \|_{1.5+\delta } \leq \varepsilon
\eqne
for $t\in [0,T]$.
\end{lemma}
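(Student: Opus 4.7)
The plan is to bound each of the three quantities directly using the integral representations \eqref{na_eta_-_I} and \eqref{EQ01} that were derived just before the lemma, combined with Minkowski's integral inequality in $H^{1.5+\delta}$ and the multiplicative Sobolev inequality \eqref{kpv_est}. Since the assumption $\|v\|_{2.5+\delta},\|\eta\|_{2.5+\delta},\|\chi\eta\|_{3+\delta}\le M$ on $[0,T_0]$ controls all the relevant factors uniformly in time, each of the three differences will acquire a prefactor of $t$ from the time integration, after which choosing $T$ small will yield $\varepsilon$.

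For the bound on $\|I-\nabla\eta\|_{1.5+\delta}$, I would apply Minkowski to \eqref{na_eta_-_I} to obtain
\[
\|I-\nabla\eta\|_{1.5+\delta}\le \int_0^t\|\nabla v\|_{1.5+\delta}\,ds\le \int_0^t\|v\|_{2.5+\delta}\,ds\le Mt.
\]
For $\|I-a\|_{1.5+\delta}$ I would use \eqref{EQ01} together with \eqref{kpv_est} at $s=1.5+\delta$, noting that the product structure $\nabla\eta\,\nabla v$ gives
\[
\|a-I\|_{1.5+\delta}\le \int_0^t \|\nabla\eta\|_{1.5+\delta}\,\|\nabla v\|_{1.5+\delta}\,ds\lesssim \int_0^t \|\eta\|_{2.5+\delta}\|v\|_{2.5+\delta}\,ds\lesssim M^2 t.
\]

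For $\|I-aa^T\|_{1.5+\delta}$ I would use the algebraic identity $aa^T-I = a(a^T-I)+(a-I)$ together with \eqref{kpv_est} to get
\[
\|aa^T-I\|_{1.5+\delta}\lesssim \|a\|_{1.5+\delta}\|a^T-I\|_{1.5+\delta}+\|a-I\|_{1.5+\delta},
\]
where $\|a\|_{1.5+\delta}\le \|I\|_{1.5+\delta}+\|a-I\|_{1.5+\delta}\le C(1+M^2 T_0)$ on the bounded domain $\Omega=\mathbb{T}^2\times(0,1)$. Combining with the previous step yields $\|aa^T-I\|_{1.5+\delta}\le C(M,T_0)\,t$. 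Choosing $T=T(M,\varepsilon)\in(0,T_0)$ sufficiently small so that each of the three estimates is below $\varepsilon$ then finishes the proof.

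There is no real obstacle here: the identities \eqref{na_eta_-_I} and \eqref{EQ01} reduce the lemma to a routine application of the multiplicative Sobolev inequality, and all quantities involved are already bounded by $M$ by hypothesis. The only minor point to keep track of is that $\|\nabla\eta\|_{1.5+\delta}$ enters the estimate for $a-I$ and must be controlled via $\|\eta\|_{2.5+\delta}\le M$; once this is noted the argument is immediate.
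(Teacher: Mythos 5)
Your proof is correct and follows essentially the same route as the paper: the first two bounds come directly from \eqref{na_eta_-_I} and \eqref{EQ01} together with \eqref{kpv_est}, and the $aa^T$ bound from the same algebraic splitting $I-aa^T=(I-a)+a(I-a^T)$. The only cosmetic difference is that you bound $\|a\|_{1.5+\delta}$ via the triangle inequality and the already-obtained estimate on $\|a-I\|_{1.5+\delta}$, whereas the paper invokes \eqref{a_in_H1.5} to get $\|a\|_{1.5+\delta}\lesssim M^2$ directly; both are valid.
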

\begin{proof}
By \eqref{na_eta_-_I} and \eqref{EQ01}, we have $\| I-a \|_{1.5+\delta } \lec M^2 t $ and $\| \na\eta - I \|_{1.5+\delta } \lec M t$. Moreover, the triangle inequality and \eqref{a_in_H1.5} give $\| I-a a^T \|_{1.5+\delta } \leq \| I- a \|_{1.5+\delta } (1+ \| a \|_{1.5+\delta }) \lec M^2(1+M^2) t$, and so the claim follows by taking $T$ sufficiently small.
\end{proof}

Corollary \ref{cor_stab_rt} below provides a similar estimate for the pressure function, which enables us to extend the Rayleigh-Taylor condition \eqref{rayleigh_taylor} for small $t>0$.\\

\subsection{Div-curl estimates}\label{sec_div_curl}
Let $\psi (x_3) \in C^\infty (\RR ; [0,1] ) $ be such that $\supp \, \psi \subset \{ \chi =1 \}$ and $\psi =1$ in a neighborhood of $\Gamma_1$. Note that both $\chi$ and $\psi$ commute with any differential operator in the variables $x_1,x_2$, and that, provided $\psi$ is present in any given expression involving classical derivatives or $\Lambda$, we can insert in $\chi$ at any other place. For example
\eqnb\label{cutoffs_plugin}
\na f S g \na (\psi w) =  \na f S(\chi  g) \na (\psi w)   
\eqne
for any functions $f$, $g$, and $w$.

In this section, we provide estimates that allow us to control the key
quantities $\| v \|_{2.5+\delta }$, $\| \chi \eta \|_{3+\delta }$, and
$\| \eta \|_{2.5+\delta }$. Namely, we denote by $P$ any polynomial depending on these quantities, and we show that
\eqnb
\label{divcurl2}
\| \chi \eta \|_{3+\delta  } \lec t\| \chi \na \omega_0 \|_{1+\delta }+ 1+ \| \Lambda^{2.5+\delta } \eta_3 \|_{L^2 (\Gamma_1)}+\int_0^t P \,\d s,
\eqne
and 
\eqnb\label{v_2.5+delta}
\| v \|_{{2.5 +\delta }} \lec  \| v \|_{L^2} +\| S(\psi v) \|_{L^2}+\Vert u_0\Vert_{2.5+\delta}.
\eqne
Note that, on the other hand, by $\eta_t=v$ and $\eta(0,x)=x$, we have
\eqnb\label{divcurl1}
\|  \eta \|_{2.5+\delta  } \lec 1 + \int_0^t \Vert v\Vert_{2.5+\delta}\d s
\les 1+ \int_0^t P\d s
.
\eqne
As pointed out above,  we simplify the notation by omitting any indices in the cases where the exact value of the index becomes irrelevant. In those cases we only keep track of the power of the term and the order of any derivatives, as such terms are estimated using 
a H\"older, Sobolev, interpolation, or commutator inequality.

We start with the proof of \eqref{divcurl2}. We use \eqref{harmonic_est} to get 
\eqnb\label{001}
\| \chi \eta \|_{{3+\delta }} \lec \| \chi \eta \|_{L^2 } + \| \mathrm{curl}\, (\chi \eta ) \|_{{2+\delta }} + \| \mathrm{div}\, (\chi \eta ) \|_{{2+\delta }} + \| \Lambda^{2.5+\delta } \eta_3 \|_{L^2 (\Gamma_1)}.
\eqne
For the term involving curl, we first recall the Cauchy invariance
\eqnb\label{cauchy}
\epsilon_{ijk} \p_j v_m \p_k \eta_m = (\omega_0)_i,
\eqne
cf.~Appendix of \cite{KTV} for a proof.
For $i=1,2,3$, we have
\[\begin{split}
\nabla ((\mathrm{curl}\, (\chi \eta ))_i) &= \epsilon_{ijk}  \p_j \na (\chi \eta_k) \\
&= \epsilon_{ijk} \delta_{km} \p_j \na (\chi \eta_m ) \\
&= \epsilon_{ijk} (\delta_{km} -\p_k \eta_m ) \p_j \na (\chi \eta_m ) + 2 \chi \int_0^t \epsilon_{ijk}\p_k v_m \p_j \na \eta_m \, \d s + t \chi \na \omega_0^i \\
&\hspace{9cm}+ \underbrace{ \na \eta (D^2 \chi \eta + 2\na \chi \na \eta   )}_{=:LOT_1}   \\
&= \epsilon_{ijk} (\delta_{km} -\p_k \eta_m ) \p_j \na (\chi \eta_m ) + 2 \int_0^t \epsilon_{ijk}\p_k v_m \p_j \na (\chi \eta_m) \, \d s + t \chi \na \omega_0^i + LOT_1\\
&\hspace{7cm}+ \underbrace{\int_0^t  \na  v (D^2 \chi \eta + 2\na  \chi \na \eta +  \na \chi \na  \eta )  \d s,}_{=:LOT_2}   
\end{split}
\]
where we used 
\[
0= -\epsilon_{ijk} \p_k \eta_m \p_j \na \eta_m + 2 \int_0^t \epsilon_{ijk} \p_k v_m \p_j \na \eta_m \d s + t \na (\omega_0)_i
\]
in the third equality, which in turn is a consequence of the Cauchy invariance \eqref{cauchy}. Thus 
\eqnb\label{curl_chieta_est}
\begin{split}
 \| \nabla (\mathrm{curl}\, (\chi \eta )) \|_{{1+\delta }} &\lec \| \chi \eta \|_{{3+\delta}} \| I - \na \eta \|_{{1.5+\delta }} + 2\int_0^t  \underbrace{ \| v \|_{{2.5+\delta }} \|\chi  \eta \|_{{3+\delta }} }_{\leq P}\d s 
\\&
+ t\| \chi \na \omega_0 \|_{{1+\delta  }}+ \| LOT_1+LOT_2 \|_{{1+\delta }},
\end{split}
\eqne
where we used \eqref{kpv_est}. Note that
\[
\| LOT_1 \|_{1+\delta  } , \| LOT_2 \|_{1+\delta  } \lec 1+ \int_0^t \| v \|_{2.5+\delta } \| \eta \|_{1+ 1+\delta } \d s \lec 1+\int_0^t P\, \d s,
\]
where we used $\eta (x)=x+\int_0^t v (x,s)\d s$ 
and $T\leq 1/CM$ with $\Vert v\Vert_{2.5+\delta}\leq M$
to estimate $LOT_1$. 

As for the divergence, we have $\p_t \eta = v$, which gives
\[
\begin{split}
\div\, \p_l (\chi \eta ) &=  \delta_{kj} \p_l \p_k (\chi \eta_j ) =  ( \delta_{kj} - a_{kj} ) \p_l \p_k (\chi \eta_j ) + a_{kj} \p_l \p_k (\chi \eta_j ) \\
& =  ( \delta_{kj} - a_{kj} ) \p_l \p_k (\chi \eta_j ) + \int_0^t  \bigl( \p_t a_{kj} \p_l \p_k (\chi \eta_j ) +  a_{kj} \p_l \p_k ( \chi v_j )\bigr) \d s\\
& =  ( \delta_{kj} - a_{kj} ) \p_l \p_k (\chi \eta_j ) 
\\&
+ \int_0^t  \bigl( \p_t a_{kj} \p_l \p_k (\chi \eta_j ) +  \underbrace{a_{kj} ( \p_l \p_k \chi  v_j + \p_l \chi \p_k v_j + \p_k \chi \p_l v_j)}_{=:LOT_3}  - \chi \p_l a_{kj} \p_k v_j  \bigr) \d s,
\end{split}
\]
where in the last step we  used  $a_{kj} \p_l \p_k v_j = -\p_l a_{kj}  \p_k v_j$, a consequence of the divergence-free condition $a_{kj}\p_k v_j =0$. Therefore,
  \begin{align}
   \begin{split}
   \|\na  \div\, (\chi \eta ) \|_{{1+\delta }} 
   &\lec \| I - a \|_{{1.5+\delta }}  \| \chi \eta \|_{{3+\delta  }} 
   \\&
   + \int_0^t \bigl(  \underbrace{\| \p_t a \|_{{1.5+\delta/2 }} \| \chi \eta \|_{{3+\delta }}}_{\leq P}  + \| LOT_3 \|_{{1+\delta  }}  + \| \chi \p_l a \|_{{1+\delta}} \| v \|_{{2.5+\delta }} \bigr) \d s.
   \end{split}
  \notag
  \end{align}
Since $\p_l a $ consists of sums of the terms of the form $\p_a \eta_m \p_l \p_b \eta_n $, for $m,n,a,b=1,2,3$, we have 
\[ \| \chi \p_l a \|_{{1+\delta  }} \lec \| \eta \|_{{2.5+\delta }} (\| \chi \eta \|_{{3+\delta }} + \| \eta \|_{{2+\delta  }}) .\]
Moreover,
\[
\| LOT_3 \|_1+\delta \lec \| a \|_{1.5+\delta } \| v\|_{2+\delta } \lec P,
\] 
since $1+\delta < 1.5+\delta $. Thus 
\[
\| \na \div\, (\chi \eta ) \|_{{1+\delta }} \lec \| I - a \|_{{1.5+\delta }}  \| \chi \eta \|_{{3+\delta  }} + \int_0^t P\, \d s.
\]
Applying this inequality and \eqref{curl_chieta_est} into \eqref{001} gives
\[\begin{split}
\| \chi \eta \|_{{3+\delta }} &\lec  \| \chi \eta \|_{L^2} + \| \mathrm{curl}\, (\chi \eta ) \|_{1+\delta } + \| \mathrm{div}\, (\chi \eta ) \|_{1+\delta }   + \| \chi \eta \|_{{3+\delta }}\left( \| I-a \|_{1.5+\delta } + \| I - \na \eta \|_{1.5+\delta } \right)  \\
&+ \int_0^t P\, \d s  + t\| \chi \na \omega_0 \|_{1+\delta }+\Vert\Lambda^{2.5+\delta}\eta_3\Vert_{L^2(\Gamma_1)}  +1 .
\end{split}
\]
Recalling \eqref{a-I_and_naeta-I}, we may estimate the fourth term on the right-hand side by $\varepsilon \| \chi \eta \|_{3+\delta }$, and so we may absorb it on the left-hand side. Furthermore, we can absorb the second and the third terms on the right-hand side as well, by applying the Sobolev interpolation inequality between $L^2$ and $H^{3+\delta}$, which gives \eqref{divcurl2}.

As for the estimate \eqref{v_2.5+delta} for $v$, we note that the Cauchy invariance \eqref{cauchy} implies
\[
(\mathrm{curl}\, v )_i = \epsilon_{ijk} \p_j v_k = \epsilon_{ijk} (\delta_{km}- \p_k \eta_m )\p_j v_k + (\omega_0)_i,
\]
and the divergence-free condition, $a_{ji} \p_j v_i=0$, gives 
\[
\mathrm{div}\, v = \delta_{ji} \p_j v_i = (\delta_{ji}-a_{ji} ) \p_j v_i .
\]
Thus, using \eqref{harmonic_est}, we obtain
\eqnb\label{v_2.5+delta_temp} \begin{split}
\| v \|_{{2.5+\delta }} &\lec  \| v \|_{L^2} + \| \epsilon_{ijk} (\delta_{km} - a_{km } ) \p_j v_m  \|_{{1.5+\delta  }} 
\\&
+ \| (\delta_{ji} - a_{ji} ) \p_j v_i \|_{{1.5 +\delta }} +  \| \na_{2} v_3 \|_{H^{1+\delta  } (\Gamma_1)} + \| \omega_0 \|_{1.5+\delta  }
\end{split}
\eqne
For the boundary term, applying Sobolev interpolation and trace estimates gives 
\[
\| \na_{2} v_3 \|_{H^{1+\delta  } (\Gamma_1)} \lec \| v \|_{L^2 (\Gamma_1)} + \| \Lambda^{1.5 +\delta } v_3 \|_{H^{0.5}(\Gamma_1 )} \lec  \| v \|_{1} +  \| \Lambda^{1.5  +\delta} \na (\psi v_3 ) \|_{L^2}.
\]
As for the last term, since 
\begin{align}
   \begin{split}
   \p_3 (\psi v_3) &= \psi \div \, v - \psi \p_1 v_1 - \psi \p_2 v_2 + \p_3 \psi v_3,
    \\&
    =
    (\delta_{ji}-a_{ji})\p_j v_i
    - \p_1 (\psi v_1)
    - \p_2 (\psi v_2)
    + \p_3 \psi v_3
   \notag
   \end{split}
\end{align}
we have
\[\begin{split}
\| \Lambda^{1.5 +\delta} \na (\psi v_3 ) \|_{L^2} &\lec  \| \psi ( I-a ) \na v \|_{{1.5+\delta }} +  \| S (\psi v ) \|_{L^2} + \| \p_3 \psi v_3 \|_{{1.5 }} \\
&\lec \epsilon \| v \|_{{2.5+\delta }} +  \| v \|_{L^2} +  \| S(\psi v) \|_{L^2},
\end{split}
\]
where we used \eqref{a-I_and_naeta-I} and Sobolev interpolation of between $L^2$ and $H^{2.5 }$ for the last term. Applying this into \eqref{v_2.5+delta_temp}, and using \eqref{a-I_and_naeta-I} again we obtain 
\[
\| v \|_{{2.5+\delta }} \lec \epsilon \| v \|_{{2.5+\delta }} +  \| v \|_{L^2} + \| S(\psi v) \|_{L^2}+\| \omega_0 \|_{1.5+\delta },
\]
which, after absorbing the first term on the right-hand side, gives \eqref{v_2.5+delta}, as required.

\subsection{Pressure estimate}\label{sec_pressure}

In this section we show that if $\| q \|_{2.5+\delta }$ and $\| \psi q \|_{3+\delta }$ are finite, then
\eqnb\label{pressure_est_2.5}
\| q \|_{2.5+\delta } \lec \| v \|_{2+\delta }^2
\eqne
and 
\eqnb\label{pressure_est_with_psi}
 \| \psi q \|_{3+\delta } \leq P(\| \chi \eta \|_{3+\delta  }, \| v \|_{2.5+ \delta  }).
\eqne
In the remainder of this section we denote any polynomial of the form of the right-hand side by $P$, for simplicity. We also continue the convention of omitting the irrelevant indices.

We first prove \eqref{pressure_est_with_psi} assuming that \eqref{pressure_est_2.5} holds.
Multiplying the Euler equation,
$
\p_t v_i  = - a_{ki } \p_k q
$
by $\psi$ we obtain
$
\p_t (\psi v_i) = -a_{ki} \p_k (\psi q) + a_{ki} \p_k \psi \, q
$.
Now applying $a_{ji}\p_j$, summing over $i,j$, and using the Piola identity $\p_j a_{ji}=0$ we get
\[\begin{split}
&-\p_j (a_{ji} a_{ki} \p_k( \psi q)) + \p_j (a_{ji} a_{ki} \p_k \psi \, q )=a_{ji}\p_j \p_t (\psi v_i)
\\&\indeq
= \psi a_{ji}\p_j \p_t v_i + a_{ji} \p_j \psi \p_t v_i
= - \psi \p_t a_{ji}\p_j  v_i - a_{ji} \p_j \psi a_{ki} \p_k q ,
\end{split}
\]
where we have applied the product rule for $\p_j$ in the second equality, and, in the third equality, we used $\p_t (a_{ji} \p_j v_i )=0$ for the first term.
Thus
\eqnb\label{poisson_psiq}\begin{split}
\Delta (\psi q) &=  \p_j \bigl( (\delta_{jk} - a_{ji}a_{ki} ) \p_k (\psi q) \bigr) + \p_j (a_{ji} a_{ki}  \p_k (\psi q)) \\
&= \p_j \bigl((\delta_{jk} - a_{ji}a_{ki} ) \p_k (\psi q) \bigr) + \p_j (a_{ji} a_{ki}  \p_k \psi q) + \psi \p_t a_{ji} \p_jv_i  +  a_{ji} \p_j \psi a_{ki} \p_k q ,
\end{split}
\eqne
and thus, by the elliptic regularity,
noting that $\psi q=0$ on $\Gamma_0\cup \Gamma_1$,
\begin{align}\begin{split}
\| \psi q \|_{{3+\delta }} &\lec \| (I- aa^T )\na (\psi q) \|_{{2+\delta }} + \| a^2 \na \psi  q \|_{{2+\delta }} + \| \psi \p_t a \na v \|_{{1+\delta }} + \| a \na \psi a \na q \|_{{1+\delta }}\\
&\lec \| I- a a^T \|_{L^\infty } \| \psi q \|_{{3+\delta }} + \| \chi^4 (I-aa^T) \|_{{2+\delta }} \| \psi q \|_{{2.5+\delta }} \\
&+\| \chi^2 a \|_{2 +\delta }^2 \| q \|_{2+\delta }	+ \| \p_t a \|_{1.2+\delta } \| v \|_{2.3+\delta } + \| a \|_{1.5+\delta }^2 \| q \|_{2+\delta } \\
&\lec \varepsilon \| \psi q \|_{{3+\delta }} + (1+\| \chi \eta \|^4_{3+\delta  } ) \| \psi q \|_{2.5+\delta } + P,
\end{split}
\label{EQ02}
\end{align}
where we used \eqref{cutoffs_plugin}, the estimate $\| fg\|_{2+\delta } \lec \| f \|_{L^\infty } \| g \|_{2+\delta } + \| f \|_{2+\delta } \| g \|_{L^\infty }$, as well as the embedding $\| g \|_{L^\infty }\lec \| g \|_{1.5+\delta }$ and \eqref{kpv_est} in the second inequality. In the third inequality we used \eqref{at_in_H1.5} and \eqref{a-I_and_naeta-I}.
In the second term on the far right side of \eqref{EQ02}, we use \eqref{pressure_est_2.5}, proven below, to show that it is dominated by~$P$.
Thus we have obtained \eqref{pressure_est_with_psi}, given \eqref{pressure_est_2.5}, which we now verify.

In order to estimate $q$ in $H^{2.5+\delta }$, we note that, as in \eqref{poisson_psiq} above, $q$ satisfies the Poisson equation
\[
\p_{kk} q = \p_t a_{ji} \p_j v_i +  \p_j ( (\delta_{jk} - a_{ki}a_{ji} ) \p_k  q )
\]
in $\Omega$, together with homogeneous boundary condition $q=0 $ on $\Gamma_1$ (by \eqref{no_q_ontop}), and nonhomogeneous Neumann boundary condition $\p_3 q = (\delta_{k3} - a_{k3} )\p_k q$ on $\Gamma_0$, since taking $\p_t $ of the boundary condition $v_3 =0$ on $\Gamma_0$ gives $a_{k3} \p_k q =0$. Thus, elliptic estimates imply
\[\begin{split}
\| q \|_{2.5+\delta } &\lec \| \p_t a  \na v\|_{0.5+\delta } + \| (I-aa^T ) \na q \|_{1.5+\delta }+ \| (I-a) \na q \|_{H^{1+\delta }(\Gamma_0)}\\
&\lec \| \p_t a \|_{1+\delta } \| v \|_{2+\delta } + \| I-aa^T \|_{1.5+\delta} \| q \|_{2.5+\delta } + \| I-a \|_{1.5+\delta } \| q \|_{2.5+\delta }\\
&\lec \| v \|_{2+\delta }^2 + \varepsilon \| q \|_{2.5+\delta } ,
\end{split}
\]
where we used \eqref{at_in_H1.5} and \eqref{a-I_and_naeta-I} in the last step. Taking a sufficiently small $\varepsilon >0$ proves \eqref{pressure_est_2.5}, as required.
\subsection{Time derivative of $q$}\label{sec_pressure_time_deriv}
In this section, we supplement the pressure estimates \eqref{pressure_est_2.5} and \eqref{pressure_est_with_psi} from the previous section with
\eqnb\label{dt_q}
\| \p_t q \|_{2+\delta } \leq P,
\eqne
\eqnb\label{dt_psi_q_2.5}
\| \p_t (\psi  q) \|_{2.5+\delta } \leq P,
\eqne
where $P$ denotes a polynomial in terms of 
$\| \chi \eta \|_{3+\delta }$, $\| \eta \|_{2.5+\delta }$, and $\| v \|_{2.5+\delta } $.
We note that \eqref{dt_psi_q_2.5} implies that the Rayleigh-Taylor condition \eqref{rayleigh_taylor} holds for sufficiently small $t$ given our key quantities \eqref{big_3} remain bounded.
\begin{cor}[Rayleigh-Taylor condition for small times]\label{cor_stab_rt}
Let $M,T_0>0$, and suppose that $\| v \|_{2.5+\delta }$, $\| \eta \|_{2.5+\delta }$, $\| \chi \eta \|_{3+\delta } \leq M$ for $t\in [0,T_0]$. There exists $T=T(M,b ) \in (0,T_0)$ such that
\eqnb\label{rayleigh_t_small_times}
\p_3 q (x,t) \leq - \frac{b}2 
\eqne
for $x\in \Gamma_1$ and $t\in [0,T]$.
\end{cor}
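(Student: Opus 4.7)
The plan is to control the evolution of $\partial_3 q$ on $\Gamma_1$ by the fundamental theorem of calculus and then use the bound on $\partial_t (\psi q)$ supplied by \eqref{dt_psi_q_2.5} together with a trace and Sobolev embedding to make the time increment small.

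More precisely, I would fix $x \in \Gamma_1$ and write
\begin{equation}
\p_3 q (x,t) = \p_3 q (x,0) + \int_0^t \p_t \p_3 q (x,s) \, \d s.
\notag
\end{equation}
By the initial Rayleigh-Taylor condition \eqref{rayleigh_taylor}, the first term is bounded above by $-b$. For the integrand, since $\psi = 1$ in a neighborhood of $\Gamma_1$, we have $\p_3 q = \p_3 (\psi q)$ on $\Gamma_1$, and hence by the trace theorem and the Sobolev embedding $H^{1+\delta}(\Gamma_1)\hookrightarrow L^\infty(\Gamma_1)$ (on the 2D surface $\Gamma_1$),
\begin{equation}
\|\p_t \p_3 q\|_{L^\infty(\Gamma_1)} = \|\p_t \p_3 (\psi q)\|_{L^\infty(\Gamma_1)} \lec \|\p_t \p_3 (\psi q)\|_{H^{1+\delta}(\Gamma_1)} \lec \|\p_t (\psi q)\|_{2.5+\delta}.
\notag
\end{equation}
By \eqref{dt_psi_q_2.5}, the last quantity is bounded by $P(M)$ for some polynomial $P$ depending only on $M$ (through the hypothesized upper bounds on $\|v\|_{2.5+\delta}$, $\|\eta\|_{2.5+\delta}$, $\|\chi\eta\|_{3+\delta}$).

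Combining these, for $t\in [0,T_0]$ and any $x\in\Gamma_1$,
\begin{equation}
\p_3 q(x,t) \leq -b + C\, t\, P(M).
\notag
\end{equation}
Choosing $T = \min\{T_0,\, b/(2CP(M))\}$ yields \eqref{rayleigh_t_small_times} on $[0,T]$, which depends only on $M$ and $b$, as required. The only real content is verifying that \eqref{dt_psi_q_2.5} gives a bound in $H^{2.5+\delta}(\Omega)$ that is strong enough to yield an $L^\infty$ bound on $\p_3(\psi q)$ at the boundary, i.e.\ that $2.5+\delta$ loses one derivative to $\p_3$ and a half to the trace, leaving $H^{1+\delta}(\Gamma_1)$ which embeds into $L^\infty(\Gamma_1)$; this is the only step where one should check the Sobolev indices, and it goes through cleanly.
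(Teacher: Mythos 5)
Your proof is correct and takes essentially the same approach as the paper: both start from the fundamental theorem of calculus for $\p_3 q$ on $\Gamma_1$, and both bound $\|\p_t \p_3 q\|_{L^\infty(\Gamma_1)}$ by $\|\p_t(\psi q)\|_{2.5+\delta}$ (using $\psi\equiv 1$ near $\Gamma_1$), which is in turn bounded by $P(M)$ via \eqref{dt_psi_q_2.5}. The only difference is that you spell out the trace and Sobolev embedding indices, which the paper leaves implicit.
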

\begin{proof}
The proof is analogous to the proof of Lemma~\ref{lem_stab_a_naeta}, by noting that for $x\in \Gamma_1$
\[
\p_3 q (x,t) - \p_3 q(x,0) = \int_0^t \p_t \p_3 q \d s \leq \int_0^t \|\p_t \na q \|_{L^\infty (\Gamma_1) } \d s \leq \int_0^t \| \p_t (q \psi) \|_{{2.5+\delta }} \d s\leq P(M) t, 
\]
which is bounded by $b/2$ for sufficiently small $t$.
\end{proof}

We prove \eqref{dt_psi_q_2.5} first. Applying $\p_t$ to \eqref{poisson_psiq} we obtain
\eqnb\notag\begin{split}
\Delta \p_t(\psi q) &= \p_j \bigl( (\delta_{jk} - a_{ji}a_{ki} ) \p_k \p_t(\psi q) \bigr)  -2\na (a\p_t a  \psi \na q ) + \na (2a\p_t a \na \psi q + a^2 \na \psi \p_t q)\\
&+\psi \p_{tt} a \na v +\psi \p_t a \na \p_t v  +2a\p_t a \na \psi \na q  + a^2 \na \psi \na \p_t q.
\end{split}
\eqne
Noting that $\p_t (\psi q)$ satisfies the homogeneous Dirichlet boundary conditions at both $\Gamma_1$ and $\Gamma_0$, and is periodic in $x_1$ and $x_3$, the elliptic regularity gives,
for $\sigma=0.5+\delta$,
\begin{align}\begin{split}
\| \p_t (\psi q ) \|_{2 + \sigma  } &\lec \| (I-aa^T ) \na \p_t (\psi q ) \|_{1+\sigma } +\| a \p_t a \psi \na q  \|_{1+\sigma  }
+ \| a \p_t a \na \psi q \|_{1+\sigma} 
\\& 
+ \| a^2 \na \psi \p_t q \|_{1+\sigma }   
+ \|  \p_{tt} a \na v \|_{\sigma  }
+ \| \p_t a \na (a \na q) \|_{\sigma  } 
+ \| a \p_t a \na q \|_{\sigma  } 
+ \| a^2 \na \p_t q \|_{\sigma  }
\\&
\lec 
\varepsilon \| \p_t (\psi q ) \|_{2+\sigma  } 
+ \|   a \|_{1.5 +\delta } \| \p_t a \|_{1.5 +\delta   } \| q \|_{2+\sigma  } 
+\| a \|_{1.5+\delta }^2 \| \na \psi \p_t q \|_{1+\sigma }
\\&
+ \| \p_{tt} a \|_{\sigma } \| v \|_{2.5 +\delta}
+ \| \p_t a \|_{1.5 +\delta } \| a \|_{1.5 +\delta  } \| q \|_{2+\sigma } 
\\&
+ \| a \|_{1.5 +\delta } \| \p_t a \|_{1.5+\delta } \| q \|_{1+\sigma  } 
+  \| a \|_{1.5 +\delta}^2 \| \na \psi \na \p_t q \|_{\sigma  }  ,
\end{split}
\label{EQ03}
\end{align}
where we used \eqref{a-I_and_naeta-I} in the second inequality.
 Taking $\varepsilon >0 $ sufficiently small, and absorbing the first term on the left-hand side, and using \eqref{pressure_est_2.5}, \eqref{at_in_H1.5}, and \eqref{att_in_Hr}, we obtain
\eqnb\label{dt_q_naive}
\| \p_t (\psi q ) \|_{2 + \sigma  } \lec P (1+ \| \na \psi \p_t q \|_{1+\sigma} +\| \na \p_t q \|_{\sigma  }  ).
\eqne
Thus, given \eqref{dt_q}, we have obtained \eqref{dt_psi_q_2.5}.

It remains to show \eqref{dt_q}. Although it might seem that taking $\psi \coloneqq 1$ and $\sigma \coloneqq \delta $ in \eqref{dt_q_naive} might seem equivalent to \eqref{dt_q}, we must note that in such case $\p_t q$ does not satisfy the homogeneous Dirichlet boundary condition at $\Gamma_0$. Instead, we have the Neumann condition 
\[\p_3 \p_t q = (\delta_{k3} - a_{k3}) \p_k q - \p_t a_{k3} \p_k q,
\]
by applying $\p_t$ to $a_{k3}\p_k q =0$ on $\Gamma_1$. Hence, in the case $\psi =1$ and $\sigma \coloneqq \delta$, the estimate \eqref{dt_q_naive} does include the last two terms inside the parantheses, as $\na \psi = \na 1=0$, but instead  \eqref{EQ03} needs to be amended by the boundary term
\[
\| (I-a ) \na \p_t q - \p_t a \na q \|_{H^{0.5+\delta  }(\Gamma_0 )} \lec \| I- a \|_{1.5+\delta } \| \p_t q\|_{2+\delta } + \| \p_t a \|_{1.5+\delta } \| \na q \|_{2+\delta } \lec \varepsilon \| \p_t q\|_{2+\delta } + P,
\] 
where we used \eqref{a-I_and_naeta-I} again and \eqref{at_in_H1.5}, \eqref{pressure_est_2.5}. Thus choosing a sufficiently small $\varepsilon >0$ and absorbing the first term on the right-hand side we obtain \eqref{dt_q}, as required.

\subsection{Tangential estimates}\label{sec_tangential}
In this section we show that 
\eqnb\label{tang_est}
\| S (\psi v ) \|_{L^2}^2 + \| a_{3l} S \eta_l \|_{L^2 (\Gamma_1 )}^2  \lec  \| \psi v_0 \|_{2.5+\delta }^2 +  \| v_0 \|_{2+\delta }^2+\int_0^t  P\, \d s,
\eqne
where, as above, $P$ denotes a polynomial in 
$\| v (s) \|_{{2.5+\delta }}$, $\| \chi \eta (s) \|_{{3+\delta }}$, and $\| \eta  \|_{2.5+\delta }$.

Our estimate follows a similar scheme as \cite[Lemma~6.1]{KT2}, except that the argument is shorter and sharper in the sense that we eliminate the dependence on $\p_t v$, $q$ and $\p_t q$. 
Another essential change results from the appearance of the cutoff $\psi$. This localizes the estimate and causes minor changes to the scheme. Note that the appearance of $\psi$ is essential for localizing the highest order dependence on $\eta$. Namely, it allows us to use the $H^{3+\delta }$ norm of merely $\chi \eta$, rather than $\eta$ itself, which we only need to control in the $H^{2.5+\delta }$ norm.

In the remainder of Section~\ref{sec_tangential}, we prove~\eqref{tang_est}.
Note that $S\p_t (\psi v ) = -S( a_{ki} \psi  \p_k q ) $, which gives 
\[
\frac{\d }{\d t} \| S(\psi v) \|_{L^2}^2 = -\int S (a_{ki} \psi \p_k  q )S(\psi v_i) .
\]
In what follows, we estimate the right-hand side by $P+I_{113}$, where $I_{113}$ 
is defined in \eqref{EQ04} below and
is such that
\eqnb\label{I113_toshow}
\int_0^t I_{113} \, \d s \lec - \| a_{3l} S \eta_l (t) \|_{L^2 (\Gamma )} + \| \p_3 q (0)  \|_{1.5+\delta } + \int_0^t P \,\d s.
\eqne
The claim then follows by integration in time on $(0,t)$ and by recalling \eqref{pressure_est_2.5}.
First we write
\begin{align}\begin{split}
-\int  S(a_{ki}  \psi \p_k q ) S (\psi v_i) &=- \int  Sa_{ki}  \psi \p_k q \, S (\psi v_i)-\int  a_{ki}    \p_k S(\psi q) \, S (\psi v_i)+\int  a S ( \na \psi  q) S (\psi v) \\
&- \int  \bigl( S(a  \psi \na q )-Sa  \psi \na q   - a S(\psi \nabla q)  \bigr) S (\psi v ) \\
&=: I_1 + I_2+I_3 +I_4,
\end{split}
\notag
\end{align}
where we used ${\psi \p_k q}{=\p_k (\psi q ) - \p_k \psi q}$ to obtain the second and third terms. Using \eqref{pressure_est_2.5} we obtain  
\[
I_3 \lec \| a \|_{L^\infty } \|  q \|_{2.5+\delta  } \| \psi v \|_{2.5 +\delta } \leq P.
\]
For $I_2$, we integrate by parts in $x_k$ and use the Piola identity $\p_k a_{ki}=0$ to get
\[\begin{split}
I_2 &=  \int a_{ki}  S(\psi q) \p_k S (\psi v_i)=\int a_{ki} S(\psi q)  S (\psi \p_k v_i)+\underbrace{ \int a S(\psi q)  S (\na  \psi v)}_{\lec \| a \|_{L^\infty } \| q \|_{2.5 +\delta } \| v \|_{2.5 +\delta }} .
\end{split}
\]
Note that the boundary terms in the first step vanish, as $q=0$ on $\Gamma_1$ and $\psi$ vanishes in a neighborhood of $\Gamma_0$.
Moving $S$ away from $\psi \p_k v$ in the first term above, and recalling the divergence-free condition $a_{ki} \p_k v_i =0$, we obtain 
\[ a_{ki} S (\psi \p_k v_i ) = - S a_{ki} \psi \p_k v_i - (S(a_{ki} \psi \p_k v_i ) - a_{ki} S (\psi \p_k v_i ) - S a_{ki} \psi \p_k v_i ) ,
\]
and thus
\[\begin{split}
I_2& \lec - \int S a S(\psi q)  \psi \na  v    +\| S(\psi q) \|_{L^3 } \| S(a \psi \na v) - a S(\psi \na v) - Sa \psi \na v \|_{L^{\frac32}} +P \\
&\lec  - \int \Lambda^{2+\delta  } a \Lambda^{\frac12}\left( S(\psi q)  \psi \na v \right)   + \| \psi q \|_{3+\delta }( \| a \|_{W^{1,6}} \|  v \|_{{2.5+\delta }} + \| a \|_{W^{1.5+\delta , 3}}  \| v \|_{W^{2,3}} ) +P\\
&\lec \| \chi^2 a \|_{2+\delta  } \| S (\psi q ) \psi \na v \|_{0.5}+P
\lec \| \chi \eta \|_{3 +\delta }^2 \| \psi q \|_{3+\delta }  \|  v \|_{2.5+\delta }  +P \lec P ,
\end{split}
\]
where we used the embedding $H^{0.5} \subset L^3$, the commutator estimate \eqref{kpv3} in the second inequality, and the embeddings $H^1 \subset L^6$, $H^{0.5}\subset L^3$; we also used \eqref{pressure_est_with_psi} and \eqref{achi_in_H2}  in the fourth and fifth inequalities respectively.

As for $I_4$ we have
\[\begin{split}
I_4 &\lec \| \psi v \|_{2.5+\delta } \| S (a  \psi \na q ) - S a \psi \na q - a S (\psi \na q) \|_{L^2} \\
&\lec P \| S (a  \psi \na q ) - S a \psi \na q - a S (\psi \na q ) \|_{L^2}\\
&\lec P (\| \chi a \|_{W^{1,6}} \| \psi \na q \|_{W^{1.5+\delta , 3}} + \| \chi a \|_{W^{1.5+\delta ,3}} \| \psi \na q \|_{W^{1,6}} )\lec P,
\end{split}
\]
where we have applied \eqref{kpv3}, \eqref{achi_in_H2}, 
\eqref{cutoffs_plugin}, \eqref{pressure_est_2.5}, and \eqref{pressure_est_with_psi} in the last line. 

For $I_1$, we write
\[
S = \sum_{m=1,2} S_m \p_m + S_0,
\] 
where $S_m \coloneqq  -\Lambda^{\frac12+{\delta }} \p_m$ and $S_0 \coloneqq \Lambda^{\frac12+{\delta }}$. 
Furthermore, differentiating the identity $a\na \eta =I$ with respect to $x_m$, where $m=1,2$, we get $\p_m a \na \eta =- a\p_m \na \eta $, and then multiplying this matrix equation by $a$ on the right-side gives $\p_m a = - a \p_m \na \eta a$, which in components reads
\[
\p_m a_{ki} = - a_{kj} \p_m \p_l \eta_j a_{li}
.
\]
Thus we may write 
\[
Sa_{ki} = - S_m ( a_{kj} \p_m \p_l \eta_j a_{li} )  + S_0 a_{ki},
\]
and consequently
\[\begin{split}
I_1 &= \sum_{m=1,2}\int S_m ( a_{kj} \p_m \p_l \eta_j a_{li} )  \psi \p_k q\, S(\psi v_i) - \int S_0 a  \psi \na q S(\psi v) \\
&=   \sum_{m=1,2}\int  a_{kj} S_m \p_m   \p_l \eta_j a_{li}   \psi \p_k q S(\psi v_i) +  \sum_{m=1,2}\int 
\bigl( S_m ( a 
D^2\eta a ) -  a S_m D^2  \eta a  \bigr)   \psi \na q S(\psi v) \\&
- \int S_0 a \psi \na q S(\psi v) \\
&=: I_{11} + I_{12} + I_{13} 
.
\end{split} \]
We have 
\[\begin{split}
I_{13} &\lec \| S_0 a \|_{L^2} \| \na q \|_{L^\infty } \| S ( \psi v ) \|_{L^2} \lec \|a \|_{0.5+\delta } \| q \|_{2.5+\delta  } \| v \|_{2.5+\delta } \leq P,
\end{split}
\]
and 
\[\begin{split}
I_{12} &\lec \| \na q \|_{L^\infty } \| \psi v \|_{2.5+\delta } \| S_m ((\chi^2 a)^2 D^2 ( \chi \eta) ) - (\chi^2 a)^2 S_m D^2 (\chi \eta ) \|_{L^2}  \\
&\lec P \left( \| (\chi^2 a)^2 \|_{W^{1.5+\delta , 3}}  \| \chi \eta \|_{W^{2,6}} + \| (\chi^2 a )^2 \|_{W^{1,6}} \| \chi \eta \|_{W^{2.5+\delta, 3}} \right) \lec P, 
\end{split}
\]
as claimed, where we used \eqref{cutoffs_plugin} in the first inequality, and \eqref{kpv1}, \eqref{achi_in_H2} in the second.

For $I_{11}$ we write $\sum_{m=1,2}S_m \p_m =  - S_0+S $, integrate by parts in $x_l$ and use the Piola identity $\p_l a_{li} =0$ to get
\begin{align}\begin{split}
I_{11} & = -\int  a S_0  \na \eta  a \psi \na q S(\psi v) -\int \na a S    \eta a   \psi \na q S(\psi v)-\int  a S    \eta a  \na \psi \na q S(\psi v)  \\
&\underbrace{-\int  a S    \eta a    \psi D^2 q S(\psi v)}_{=:I_{111}}\underbrace{-\int  a_{kj} S    \eta_j a_{li}   \psi\p_k q \p_l S(\psi v_i)}_{=:I_{112}}+\underbrace{ \int_{\Gamma_1} a_{kj} S \eta_j a_{3i} \p_k q S(\psi v_i ) \d \sigma}_{=:I_{113}}\\
&\lec \| a \|^2_{L^\infty } \| q \|_{W^{1,\infty }} \| \eta \|_{2.5+\delta } \|v \|_{2.5+\delta } 
\\&
+ \|\na (\chi^2a ) \|_{L^6 } \| S(\chi \eta ) \|_{L^3} \| a \|_{L^\infty } \| q \|_{W^{1,\infty }}\| v\|_{2.5+\delta } + I_{111}  + I_{112}+I_{113} \\
&\lec P + I_{111}  + I_{112}+I_{113}
\end{split}
   \label{EQ04}
\end{align}
where we used \eqref{cutoffs_plugin} in the first inequality, and inequalities $\| f \|_{L^\infty }\lec \| f \|_{1.5+\delta }$, and \eqref{achi_in_H2} to obtain $\| \na (\chi^2 a )\|_{L^6} \lec \| \chi^2 a \|_{2} \lec \| \chi \eta \|_{3+\delta }$ in the second inequality. Note that there is no boundary term at $\Gamma_0$ as $\psi =0$ on~$\Gamma_0$.

For $I_{111}$ we write $\psi D^2 q = D^2 (\psi q ) - 2 \na \psi \na q - D^2 \psi q$ to obtain
\[
\begin{split}
I_{111} &= -\int  a S    \eta a    D^2 (\psi q) S(\psi v)+\int  a S    \eta a    (2\na \psi \na q + D^2  \psi  q ) S(\psi v) \\
&\lec \| a \|_{L^\infty}^2 \| S(\chi \eta ) \|_{L^3} \| v\|_{2.5+\delta } ( \| \psi q \|_{W^{2,6}} + \| q \|_{W^{1,6 }} )\lec P,
\end{split}
\]
where we used \eqref{achi_in_H2}, \eqref{pressure_est_with_psi},
and the embedding $H^{0.5}\subset L^3$
in the last inequality.
As for $I_{112}$, we note that the divergence-free condition, $a_{li} \p_l v_i=0$, gives 
\[S (a_{li} \p_l (\psi v_{i}))= S (a_{li} \p_l \psi v_{i}). \] In order to use this fact, we put $a_{li}$ inside the second $S$ in $I_{112}$ and extract the resulting commutator. Namely, we denote $f\coloneqq -a_{kj} S\eta_j  \psi\p_k q$ for brevity, and write
\begin{align}
\begin{split}
I_{112} &= \int  a_{li}  S\p_l (\psi v_i )f\\
&=  \int  S (a_{li}  \p_l \psi v_i)f -\int  S a    \na (\psi v)f\underbrace{-\int   \bigl( S(a \na  (\psi v )) - S a \na  (\psi v) -a S\na  (\psi v) \bigr)f}_{\lec  \| S (a \na (\psi v)) - Sa \,\na (\psi v) - a S\na (\psi v) \|_{L^2} \| f \|_{L^2}}\\
&\lec     \int  \bigl( S(a \na  \psi v) - Sa  \na  \psi v- a S( \na  \psi v) \bigr)f -\underbrace{\int   S a   \psi \na  v f}_{=\int \Lambda^{2+\delta }  (\chi^2 a ) \Lambda^{\frac12} (\psi \na v f ) } \hspace{0.5cm}+\underbrace{ \int  a S (\na  \psi  v)f}_{\lec  \| a \|_{L^\infty }  \| v \|_{2.5+\delta }\|f \|_{L^2}} \\
&+ \| S (a \na (\psi v)) - Sa \,\na (\psi v) - a S\na (\psi v) \|_{L^2}\| f \|_{L^2}\\
&\les \| \chi^2 a \|_{2+\delta } \| \Lambda^{\frac12 } ( \psi \na v f) \|_{L^2} \\
&+  \| f \|_{L^2} \bigl( P+\| S(a \na \psi v ) - Sa \na \psi v- a S(\na \psi v) \|_{L^2} + \| S (a \na (\psi v)) - Sa \,\na (\psi v) - a S\na (\psi v) \|_{L^2} \bigr) \\
&\lec P,
 \end{split}
\notag
\end{align}
where we have used \eqref{cutoffs_plugin} in the first inequality. In the last step we have applied \eqref{achi_in_H2}
and used $\| f \|_{L^2} \lec \| a \|_{L^\infty } \| \chi \eta \|_{2.5+\delta } \| q \|_{W^{1,\infty }} \lec P$; we have also estimated both commutator terms by
\[
\| \chi^2 a \|_{W^{1, 3 }} \| v \|_{W^{1.5+\delta , 6}} + \| v \|_{W^{1,6}} \| \chi^2 a \|_{W^{1.5+\delta,3 }}   \leq \| \chi^2 a \|_{{2+\delta }} \| v \|_{{2.5+\delta}} \leq P,
\]
using the Kato-Ponce inequality \eqref{kpv3} and \eqref{achi_in_H2}, \eqref{cutoffs_plugin}, and we estimated the remaining factor by
\[\begin{split}
   \| \Lambda^{\frac12 } ( \psi \na v f) \|_{L^2} 
     &\lec
       \Vert \Lambda^{\frac12}f\Vert_{L^2}
       \Vert \psi\nabla v\Vert_{L^\infty}
       +
       \Vert f\Vert_{L^3}
       \Vert \Lambda^{\frac12}(\psi\nabla v)\Vert_{L^6}
       \\&
       \lec
       \Vert f\Vert_{0.5}
       \Vert v\Vert_{W^{1,\infty}}
       +
       \Vert f\Vert_{0.5}       
       \Vert v\Vert_{2.5}
       \\&
       \lec 
       \| a \|_{1.5+\delta } \| \chi \eta \|_{3+\delta } \| q \|_{1.5+\delta } 
       \Vert v\Vert_{2.5+\delta}
       \leq P.
     \end{split}
     \]
\colb

It remains to estimate the boundary term $I_{113}$, as claimed in \eqref{I113_toshow}.
We note that $\p_k q=0$ for $k=1,2$ on $\Gamma_1$, and $v_i = \p_t \eta_i$, which gives $a_{3i} Sv_i = \p_t (a_{3i} S \eta_i) - \p_t a_{3i} S\eta_i$. Thus
\[\begin{split}
I_{113}& = \frac12 \frac{\d }{\d t} \int_{\Gamma_1} |a_{3i} S \eta_i |^2 \p_3 q \d \sigma - \int_{\Gamma_1} a_{3j} S \eta_j \p_3 q \p_t a_{3i} S \eta_i \d \sigma -  \int_{\Gamma_1} a_{3j} S \eta_j \p_3 \p_t q  a_{3i} S \eta_i \d \sigma  \\
&\leq \frac12 \frac{\d }{\d t} \int_{\Gamma_1} |a_{3i} S \eta_i |^2 \p_3 q \d \sigma + \| a \|_{L^\infty } \| \p_t a \|_{L^\infty } \| q \|_{W^{1,\infty }} \| S(\chi \eta) \|_{L^2 (\Gamma_1)} + \| a \|_{L^\infty}^2 \| S (\chi \eta )\|_{L^2(\Gamma_1 )} \| \p_t (\psi q ) \|_{W^{1,\infty }}  \\
&\leq \frac12 \frac{\d }{\d t} \int_{\Gamma_1} |a_{3i} S \eta_i |^2 \p_3 q \d \sigma + P,
\end{split}
\]
where we used \eqref{a_in_H1.5}, \eqref{at_in_H1.5}, the facts $\| S(\chi \eta ) \|_{L^2 (\Gamma_1)} \lec \| \chi \eta \|_{3+\delta } \leq P$, and $\| q \|_{W^{1,\infty }} \lec \| q \|_{2.5+\delta }\leq P$, as well as the pressure estimate \eqref{dt_psi_q_2.5} to get $\| \p_t (\psi q ) \|_{W^{1,\infty }} \lec \| \p_t (\psi q ) \|_{2.5+\delta } \leq P $.

 Consequently, the Rayleigh-Taylor condition for small times \eqref{rayleigh_t_small_times} and the fact that $a_{ki}=\delta_{ki}$ at time $0$ give 
\[\begin{split}
\int_0^t I_{113}\, \d s &\leq \frac12 \left. \int_{\Gamma_1} |a_{3i} S \eta_i |^2 \p_3 q \d \sigma \right|_{t} - \frac12 \left. \int_{\Gamma_1} | S \eta |^2 \p_3 q \d \sigma \right|_0  + \int_0^t P \, \d s\\
&\lec -\| a_{3i } S \eta_i \|_{L^2}^2 + \| \p_3 q(0) \|_{L^\infty } + \int_0^t P \, \d s,
\end{split} 
\]
where we used $\eta(x) =x$ at time $0$ in the last step.
This concludes the proof of \eqref{tang_est}.

\subsection{Final estimates}\label{sec_final}
In this section, we collect the estimates thus completing the proof of the main theorem.

\begin{proof}[Proof of Theorem~\ref{thm_main}]
From the inequalities \eqref{v_2.5+delta}, \eqref{divcurl1}, and \eqref{divcurl2},
combined with the tangential estimate \eqref{tang_est}, give the inequality
\eqnb\label{apriori_est}
\| v \|_{2.5+\delta }, \| \eta \|_{2.5+\delta }, \| \chi \eta \|_{3+\delta } \lec \int_0^t P\, \d s + \|  \psi v_0 \|_{2.5+\delta }^2 + \| v_0 \|_{2+\delta }^2 + t \| \chi \omega_0 \|_{2+\delta } + \| \omega_0 \|_{1.5+\delta } +1 + \|v_0 \|_{L^2},
\eqne
where $P$ is a polynomial in $\| v \|_{2.5+\delta }, \| \eta \|_{2.5+\delta }$ and $\| \chi \eta \|_{3+\delta }$. Note that, in order to estimate the boundary terms on the right hand side in \eqref{divcurl1}, \eqref{divcurl2}, we have used
\[
\| S\eta_3 \|_{L^2 (\Gamma_1 )} \leq \| a_{3l} S\eta_l \|_{L^2 (\Gamma_1 )} + \| (\delta_{3l} - a_{3l}) S\eta_l \|_{L^2 (\Gamma_1 )} \leq \| a_{3l} S\eta_l \|_{L^2 (\Gamma_1 )} + \varepsilon \| \chi \eta  \|_{3+\delta } ,
\]
where we applied \eqref{a-I_and_naeta-I} and a trace estimate in the second step, and we absorbed the last term by the left hand side above. Moreover, in order to obtain the initial kinetic energy $\| v_0 \|_{L^2}$ in \eqref{apriori_est}, instead of $\| v \|_{L^2}$ from \eqref{v_2.5+delta}, we note that

\[
\| v \|_{L^2} \leq \| v_0 \|_{L^2} + \int_0^t \| \p_t v\|_{L^2} \d s \leq \| v_0 \|_{L^2} + \int_0^t \| a \|_{1.5+\delta } \| q \|_{1} \d s \lec  \| v_0 \|_{L^2} + \int_0^t P \, \d s, 
\]
where we used $\partial_{t}=-a\nabla q$, in the second inequality
and \eqref{a_in_H1.5}, \eqref{pressure_est_2.5} in the last.

The a priori estimate \eqref{apriori_est} allows us to apply a standard Gronwall argument, concluding the proof.
\end{proof}

\end{document}